
\documentclass[letterpaper, 10 pt, conference]{ieeeconf}  

\IEEEoverridecommandlockouts                              

\overrideIEEEmargins                                      



\usepackage{amsfonts}
\usepackage{amssymb}  
\usepackage{amsmath}
\usepackage{bm}
\usepackage{tikz}
\usepackage{pgfplots}
\newlength\breite
\newlength\hohe
\usepackage{float} 
\usepackage{mathtools}


\usepackage[noadjust]{cite} 

\newfloat{algorithm}{t}{}

\newcommand{\norm}[1]{\left\lVert#1\right\rVert}
\newcommand\numberthis{\addtocounter{equation}{1}\tag{\theequation}}
\renewcommand{\refeq}[1]{\stackrel{\mathclap{#1}}{=}}
\newcommand{\refleq}[1]{\stackrel{\mathclap{#1}}{\leq}}

\newcommand{\grad}{\nabla}

\newtheorem{lemma}{Lemma}
\newtheorem{remark}[lemma]{Remark}
\newtheorem{assumption}[lemma]{Assumption}
\newtheorem{theorem}[lemma]{Theorem}

\hyphenation{sub-linear}

\title{\LARGE \bf
An online convex optimization algorithm for controlling linear systems with state and input constraints
}

\author{Marko Nonhoff$^{1}$ and Matthias A. M\"uller$^{1}$%
	\thanks{$^{1}$Leibniz University Hannover, Institute of Automatic Control, 30167 Hannover, Germany.	Email: \{nonhoff,mueller\}@irt.uni-hannover.de}%
}

\newcommand\copyrighttext{%
	\footnotesize \copyright 2021 IEEE. Personal use of this material is permitted. Permission from IEEE must be obtained for all other uses, in any current or future media, including reprinting/republishing this material for advertising or promotional purposes, creating new collective works, for resale or redistribution to servers or lists, or reuse of any copyrighted component of this work in other works.}
\newcommand\copyrightnotice{%
	\begin{tikzpicture}[remember picture,overlay]
		\node[anchor=south,yshift=10pt] at (current page.south) {\fbox{\parbox{\dimexpr\textwidth-\fboxsep-\fboxrule\relax}{\copyrighttext}}};
	\end{tikzpicture}%
}

\setlength{\abovedisplayskip}{3.5pt}
\setlength{\belowdisplayskip}{3.5pt}
\allowdisplaybreaks

\begin{document}

\maketitle
\thispagestyle{empty}
\pagestyle{empty}
\copyrightnotice

\begin{abstract}
	
	This paper studies the problem of controlling \linebreak linear dynamical systems subject to point-wise-in-time constraints. We present an algorithm similar to online gradient descent, that can handle time-varying and a priori unknown convex cost functions while restraining the system states and inputs to polytopic constraint sets. Analysis of the algorithm's performance, measured by dynamic regret, reveals that sublinear regret is achieved if the variation of the cost functions is sublinear in time. Finally, we present an example to illustrate implementation details as well as the algorithm's performance and show that the proposed algorithm ensures constraint satisfaction.
	
\end{abstract}

\section{INTRODUCTION} \label{sec:Introduction}

Application of methods from online learning and optimization leads to new techniques for learning-based controller synthesis. In this paper, we apply the online convex optimization (OCO) framework first introduced in \cite{Zinkevich2003} to the problem of controlling constrained linear dynamical systems. OCO is an online variant of classical numerical optimization, where the cost function to be minimized is time-varying and a priori unknown. Specifically, at every time $t$, an algorithm chooses an action $y_t \in \mathbb Y$ from a convex constraint set $\mathbb Y$ based on the chosen actions in the past and the observed cost functions. After the action $y_t$ is chosen, the environment reveals a new cost function $L_t: \mathbb Y \rightarrow \mathbb R$ which leads to the cost $L_t(y_t)$. The goal is to minimize the total cost $\sum_{t=1}^T L_t(y_t)$ over $T$ stages. This problem has been studied extensively in the online optimization and learning community (see \cite{Shwartz2012, Hazan2016} for an overview) with a focus on the non-asymptotic performance of the algorithms. Typically, dynamic regret is chosen as a performance measure, which is defined as the cumulative gap between the cost observed by the algorithm and some benchmark \cite{Jadbabaie2015, Besbes2015, Mokhtari2016}. A sublinear regret bound is desirable, implying that the algorithm's performance is asymptotically on average no worse than the benchmark. This framework enjoys advantages in its ability to handle time-varying unknown cost functions while ensuring constraint satisfaction and the low computational complexity of its algorithms, which are desirable in controller synthesis, too.

In the classical OCO framework, one typically considers that any action $y_t \in \mathbb Y$ can be chosen. Thereby, no underlying dynamical system can be considered. Application of OCO to the control of dynamical systems has already been studied by introducing a switching cost or ramp cost $d(y_t-y_{t-1})$ to study the effect of a time coupled cost function \cite{Tanaka2006}. In particular, in \cite{Li2018}, a switching cost which can be seen as an additional quadratic cost on the input $u_{t-1}$ of a single integrator system $y_t = y_{t-1} + u_{t-1}$ is studied, which is then extended to the case of general linear systems in \cite{Li2019}. A similar approach is taken in \cite{Nonhoff2020}, where a sublinear regret bound for general controllable linear systems is derived. In \cite{Abbasi2014,Cohen2018,Akbari2019}, linear dynamical systems subject to quadratic cost functions are considered and it is shown, that the regret with respect to the best linear controller is sublinear. Therein, the algorithms update a linear control strategy at every time step. This approach is extended in \cite{Agarwal2019} to general convex cost functions. Whereas in the classical OCO framework the allowed actions $y_t$ are typically restricted to a constraint set $\mathbb Y$, none of these previous works on the combination of OCO and dynamical systems considers state or input constraints. A similar setting is considered in \cite{Colombino2020,Bianchin2020,Hauswirth2021}, where online optimization is employed to control the output of an exponentially stable system to the steady-state solutions of an optimization problem. While constraints on the solutions of the optimization problems can be considered, point-wise-in-time constraints on both the input and state trajectories of the dynamical system are not. This approach is extended to stabilizable LTI systems in \cite{Lawrence2018}.

This work builds on and extends the results in \cite{Nonhoff2020} such that constraints on both the inputs and states of the system are guaranteed to be satisfied at all times. Ensuring satisfaction of these constraints is of paramount importance, e.g., in safety-critical applications or where only a finite amount of control energy is available. As discussed above, ensuring constraint satisfaction is not possible in the existing results in the literature and it requires substantial modifications in the algorithm design and the corresponding analysis. Our analysis reveals that the proposed algorithm still enjoys sublinear regret, as is the case for existing algorithms which cannot guarantee satisfaction of input and state constraints. 

This paper is organized as follows. Section~\ref{sec:Setting} defines the problem setting, whereas our algorithm is proposed and discussed in Section~\ref{sec:Algorithm}. We proceed to give a regret analysis and our main theorem in Section~\ref{sec:Regret}. In Section~\ref{sec:Simulations}, we illustrate implementation details and the performance of the proposed algorithm. Section~\ref{sec:Conclusion} concludes the paper.

\textit{Notation}: For a vector $x \in \mathbb{R}^n$, $x_i$ denotes its $i$-th entry and $\norm{x}$ the Euclidean norm, whereas for a matrix $A \in \mathbb{R}^{n\times m}$, $A_i$ is its $i$-th row and $\norm{A}$ the corresponding induced matrix norm. Given a set $\mathcal S \subset \mathbb R^n$, $\Pi_{\mathcal S}(x) = \arg \min_{s \in \mathcal S} \norm{x-s}^2$ is the projection of $x\in\mathbb R^n$ onto the set $\mathcal S$. We define by $\mathbb{N}_{[a,b]}$ the set of natural numbers in the interval $[a,b]$. The gradient of a function $f(x)$ evaluated at $x$ is denoted by $\nabla f(x)$. Additionally, $I_n$ is the identity matrix of size $n \times n$.

\section{SETTING} \label{sec:Setting}

We consider discrete-time linear systems of the form
\begin{align}
x_{t+1} &= Ax_t + Bu_t, &&x(0) = x_0, \label{eq:SysDyn}
\end{align}
where $x_t \in \mathbb R^n$ are the states of the system and $u_t \in \mathbb R^m$ are the control inputs. The matrices $A \in \mathbb R^{n \times n}$ and \mbox{$B \in \mathbb R^{n\times m}$} are assumed to be known. System~\eqref{eq:SysDyn} is subject to state constraints $x_t \in \mathcal X$ and input constraints $u_t \in \mathcal U$ which have to be satisfied at every time instant $t \in \mathbb N_{[1,T]}$. We assume both constraint sets $\mathcal X$ and $\mathcal U$ to be compact polytopes.

\begin{assumption} \label{assump:constraints}
	The state and input constraint sets are compact convex polytopes with $0$ in their interior, given by $\mathcal X = \{x \in \mathbb R^n | C_x x \leq d_x \}$ and $\mathcal U = \{u \in \mathbb R^m | C_u u \leq d_u \}$, where $C_x \in \mathbb R^{c_x \times n}$, $d_x \in \mathbb R^{c_x}$, $C_u \in \mathbb R^{c_u \times m}$, and $d_u \in \mathbb R^{c_u}$.
\end{assumption}

Note that compactness of $\mathcal U$ implies existence of a finite constant $D_u$ such that $\norm{u_1 - u_2} \leq D_u$ for all $u_1,u_2 \in \mathcal U$.

The online control problem under consideration in this work is described as follows: At every time step $t \in \mathbb N_{[1,T]}$, the current state $x_t$ is measured. The controller has to decide a control input $u_t \in \mathcal U$, based only on the measured state and previous cost functions, and apply the control input $u_t$ to system~\eqref{eq:SysDyn}. Afterwards, the cost function $L_t: \mathcal X \times \mathcal U \rightarrow \mathbb R$ is revealed by the environment resulting in the cost $L_t (x_t,u_t)$. Finally, the system evolves to the next state $x_{t+1}$. The goal is to minimize the total cost over $T$ stages. 

As common in OCO, we consider regret as a measure for our algorithm's performance. The optimal state and input sequence $\bm x_t^* = \{ x_1^*,~\dots,~x_T^* \}$ and $\bm u_t^* = \{ u_1^*,~\dots,~u_T^* \}$, respectively, are the solution to the optimization problem
\begin{equation*}
\min_{\bm x \in \mathcal X^T, \bm u \in \mathcal U^T} \sum_{t=0}^T L_t(x_t,u_t) \quad \text{s.t. } x_{t+1} = Ax_{t} + Bu_t.
\end{equation*}
Thus, $(x^*_t,u^*_t)$ denote the optimal states and inputs at time instant $t$ in hindsight, when all cost functions $L_t$ are known. Then, in our case, we define the dynamic regret $\mathcal R$ as
\vspace{-2pt}
\begin{align} \label{eq:regret}
	\mathcal R &= \sum_{t=0}^T L_t(x_t,u_t) - L_t(x_t^*,u_t^*).
\end{align}
The dynamic regret $\mathcal R$ measures how much performance is lost due to not knowing the cost functions $L_t$ a priori. The definition in \eqref{eq:regret} is in line with the dynamic regret measure imposed in \cite{Li2019}. Another popular regret measure is comparing the algorithm's performance to the best linear feedback controller \cite{Abbasi2014, Cohen2018, Akbari2019, Agarwal2019}, which is a weaker benchmark since the optimal trajectories may not result from a linear feedback. Next, we require some technical assumptions, which are fairly standard in OCO (compare \cite{Mokhtari2016, Li2018, Li2019, Nonhoff2020}).

\begin{assumption} \label{assump:costfunctions}
	For every $t \in \mathbb N_{[0,T]}$, the cost function $L_t$ satisfies
	\begin{enumerate}
		\item $L_t(x,u) = f_t^x(x) + f_t^u(u)$,
		\item $f_t^x(x)$ is $\alpha_x$-strongly convex, $l_x$-smooth\footnote{See \cite{Nesterov2018} for a definition of $\alpha$-strong convexity and $l$-smoothness.} for all $x \in \mathcal X$,
		\item $f_t^u(u)$ is $\alpha_u$-strongly convex, $l_u$-smooth for all $u \in \mathcal U$.
	\end{enumerate}
\end{assumption}

Note that Lipschitz continuity of the cost functions \linebreak $f_t^x: \mathcal X \rightarrow \mathbb R$ and $f_t^u: \mathcal U \rightarrow \mathbb R$ with Lipschitz constants $L_x$ and $L_u$ follows from $l$-smoothness and compactness of the constraint sets $\mathcal X$ and $\mathcal U$, respectively.

Additionally, we define $\theta_t = \arg \min_{x \in \mathcal X} f_t^x(x)$ and $\eta_t = \arg \min_{u \in \mathcal U} f_t^u(u)$. Note that due to compactness of the sets $\mathcal X$ and $\mathcal U$ and strong convexity of the cost functions, the minima are attained, finite, and unique. In contrast to the trajectories $\bm x^*$ and $\bm u^*$, the sequences $\bm{\theta} = \{ \theta_1,~\dots,~\theta_T \}$ and $\bm \eta = \{ \eta_1,~\dots,~\eta_T \}$ in general do \emph{not} satisfy the system dynamics~\eqref{eq:SysDyn}. If the cost functions $L_t$ are allowed to change at every time instant, we will not be able to achieve low dynamic regret. Therefore, we define a measure for the variation of the cost functions similar to \cite{Mokhtari2016, Li2018} as $\text{Path length} := \sum_{t=0}^T \norm{\theta_t - \theta_{t-1}} + \sum_{t=0}^T \norm{\eta_t - \eta_{t-1}}$. We further restrict the class of cost functions by only considering tracking setpoints of system~\eqref{eq:SysDyn}. Let $\bar{\mathcal X} = \{ x \in \mathbb R^n | C_x (x+\delta r) \leq d_x~\forall~\norm{r}\leq1 \}$, where $\delta>0$.
\begin{assumption} \label{assump:tracking}
	For all $t \in \mathbb N_{[0,T]}$, $\theta_t$ and $\eta_t$ satisfy $\theta_t \in \bar{\mathcal X}$, $\eta \in \mathcal U$, and $\theta_t = A\theta_t + B\eta_t$.
\end{assumption}

Assumption~\ref{assump:tracking} states that the minimum $(\theta_t,\eta_t)$ of the cost function $L_t(x_t,u_t)$ at time instant $t$ is a feasible steady state with respect to the system dynamics~\eqref{eq:SysDyn} and the constraints. Hence, the control objective is to track a priori unknown and time-varying setpoints. Relaxing this assumption to general convex cost functions (termed \emph{economic} cost functions in the context of model predictive control (MPC) \cite{Faulwasser2018}) is part of our ongoing work. Moreover, Assumption~\ref{assump:tracking} restricts the optimal states $\theta_t$ to the interior of the constraint set $\mathcal X$. It is straightforward to show that the shrinked set $\bar{\mathcal X}$ can equivalently expressed as the polytope $\bar{\mathcal X} = \{ x \in \mathbb R^n | C_x x \leq \bar d_x \}$, where $\bar d_x \in \mathbb R^{n_c}$ is defined element-wise by $\bar d_{x,i} = d_{x,i} - \delta \norm{C_{x,i}}$. 

Similar to \cite{Nonhoff2020}, we assume system~\eqref{eq:SysDyn} to be controllable and $\norm{A}$ to be bounded as stated in Assumption~\ref{assump:system}.
\begin{assumption} \label{assump:system}
	The pair $(A,B)$ is controllable, i.e.,
	\begin{align*}
	\text{rank} \begin{pmatrix} B & AB & \dots & A^{n-1}B \end{pmatrix} = n
	\end{align*}
	and $\norm{A} < \frac{l_x + \alpha_x}{l_x-\alpha_x}$.
\end{assumption}
As discussed in~\cite{Nonhoff2020}, a bound on $\norm{A}$, which can be seen as a bound on the instability of system \eqref{eq:SysDyn}, is necessary since we want to control the system by applying one gradient descent step at every time instant $t$. Therefore, one gradient descent step needs to be able to counteract the instability of the system, which yields Assumption~\ref{assump:system}. It can also be seen that, if $\alpha_x = l_x$, which is the case for, e.g., $f_t^x(x) = \norm{x_t-\theta}^2$ for some $\theta \in \mathbb R^n$, then any controllable system satisfies Assumption~\ref{assump:system}. Finally, we require that any state in $\mathcal X$ can be reached from every initial state in $\mathcal X$ in finite time. 
\begin{assumption} \label{assump:contrconst}
	There exists a constant $\mu \in \mathbb N$ such that for every two states $x, y \in \mathcal X$, there exists a feasible input trajectory $\bm u = \{ u^{(1)},~\dots,~u^{(\mu)}  \}$ satisfying $A^\mu x + S_c u = y$, where $S_c = \begin{pmatrix} B & AB & \dots & A^{\mu-1}B \end{pmatrix}$.
\end{assumption}
An input trajectory $\bm u = \{u_1,\dots,u_\tau \}$, $\tau \in \mathbb N$ is called feasible if it satisfies both the input and state constraints, i.e., $u_t \in \mathcal U$ and $x_t \in \mathcal X$ when applying $\bm u$ for all $t \in \mathbb{N}_{[1,\tau]}$. Assumption~\ref{assump:contrconst} can be interpreted as assuming controllability under constraints. If Assumption~\ref{assump:contrconst} is not satisfied for a state constraint set ${\mathcal X}_0$ and an input constraint set $\mathcal U$, a suitable subset $\mathcal X$ of the viability kernel\footnote{See \cite{Aubin2011} for a definition of the viability kernel and an overview of viability theory. See \cite{Boccia2014} for an application of viability theory to MPC.} has to be found that renders Assumption~\ref{assump:constraints} and Assumption~\ref{assump:contrconst} satisfied. 

\begin{remark} \label{rem:PredHor}
	Whereas Assumption~\ref{assump:contrconst} itself is natural in our setting, we assume the constant $\mu$ to be known in Algorithm~1. This potentially leads to a large prediction horizon and degrading performance, see Section~\ref{sec:Algorithm} for details. The question how Algorithm~1 needs to be modified in order to shorten the prediction horizon while maintaining a sublinear regret bound is an interesting problem for future research.
\end{remark}

\section{ALGORITHM} \label{sec:Algorithm}

\begin{algorithm} 
	\vspace{5pt}
	{
		\setlength\belowdisplayskip{0pt}
		\setlength\abovedisplayskip{0pt}
		\fbox{\parbox{.95\linewidth}{
				
				\textbf{Algorithm 1}~(OGD for constrained linear systems)
				
				\vspace{-6pt}						
				\rule{.42\textwidth}{.5pt}
				
				Given step sizes $\gamma_v$ and $\gamma_x$, initialization $\bm{\hat u_0}$, $v_0$, and measured state vector $x_{t-1}$.
				
				At time $t \in [1,T]$:
				\begin{flalign}
					&v_t = \Pi_{\mathcal U} (v_{t-1} - \gamma_u \nabla f_{t-1}^u(v_{t-1})) \label{algo:OGDInputs} &\\
					&\hat{\bm v}_t = \{ \hat u_{t-1}^{(2)},~\dots,~\hat u_{t-1}^{(\mu)},~v_t \} &\label{algo:CandInput} \\
					&\hat x_{t+\mu} = A^{\mu} x_t + S_c \hat v_t \label{algo:PredStates} &\\
					&x^\pi_{t+\mu} = \Pi_{\bar{\mathcal X}} (\hat x_{t+\mu} - \gamma_x \grad f_{t-1}^x(\hat x_{t+\mu})) &\label{algo:OGDStates} \\
					\intertext{if $\norm{\hat x_{t+\mu}-x^\pi_{t+\mu}} = 0$ set $\alpha_t = 0$, else}
					&\qquad \bar \delta_t = \frac{\delta}{\norm{\hat x_{t+\mu}-x^\pi_{t+\mu}}} &\label{algo:delta} \\
					&\qquad \alpha_t = \frac{1}{1+\bar \delta_t} &\label{algo:alpha}
				\end{flalign}
				\begin{subequations} \label{algo:OCP}
					\begin{flalign}
						&\qquad \text{Find } \bm g_t \in \mathcal U^\mu \text{ such that} &\nonumber \\
						&\quad \qquad x^g(\tau; x_{t}) \in \mathcal X ~ \forall \tau \in \mathbb N_{[t,t+\mu]} &\\
						&\quad \qquad g_t^{(\tau)} \in \mathcal U ~ \forall \tau \in \mathbb N_{[t,t+\mu-1]} &\\
						&\quad \qquad A^\mu x_{t} + S_c g_t = x^\pi_{t+\mu} + \bar \delta_t (x^\pi_{t+\mu} - \hat x_{t+\mu}) &
					\end{flalign}
				\end{subequations}
				\begin{flalign}
					& \hat u_t = (1-\alpha_t) \hat v_t + \alpha_t g_t &\label{algo:PredInputs} \\
					& u_t = \hat u_t^{(1)} & \label{algo:Output}
				\end{flalign}
		}}
	}
	\vspace{-15pt}
\end{algorithm}

Before we state our algorithm, we first define some useful notation. Given an input sequence $\bm u = \{u^{(1)},~u^{(2)},~\dots,~u^{(\mu)}\}$, where $u^{(i)} \in \mathbb R^m$, we denote by $ u = \begin{pmatrix}
(u^{(\mu)})^T & \dots & (u^{(1)})^T
\end{pmatrix}^T$ the vector created by stacking the components of $\bm u$. Moreover, we write $x^u(\tau; x_{t})$ for the state at time $\tau \in [t,~t+\mu]$ when starting at $x^u(t; x_{t}) = x_{t}$ and applying the sequence $\bm u$.

The proposed OCO scheme is given in Algorithm~1. In our framework described above, at every time instant~$t$, Algorithm~1 computes a control input $u_t$ based on the measured state vector $x_{t}$ and the previous cost function $L_{t-1}$. Then, only after applying the control input $u_t$ to system~\eqref{eq:SysDyn}, a new cost function $L_t$ is observed, resulting in the cost $L_t(x_t,u_t)$. Note that the feasibility problem in \eqref{algo:OCP} always has a solution: Since the state $x^\pi_{t+\mu}+\bar \delta_t (x^\pi_{t+\mu} - \hat x_{t+\mu})$ is contained in $\mathcal X$ by the definition of $\bar{\mathcal X}$, Assumption~\ref{assump:contrconst} states that it can be reached from $x_{t} \in \mathcal X$ in $\mu$ time steps.

Roughly speaking, Algorithm~1 predicts the trajectories of system \eqref{eq:SysDyn} and then applies online gradient descent (OGD) (\cite{Zinkevich2003,Hazan2016}) twice to track the optimal input $\eta_t$ and the optimal state $\theta_t$. For that, the proposed algorithm can be separated into three steps. First, OGD is applied in~\eqref{algo:OGDInputs} to compute an estimate $v_t$ of the optimal input $\eta_t$. Second, OGD is applied again to track the optimal state $\theta_t$ in \eqref{algo:CandInput}-\eqref{algo:OGDStates}. Similar to warm-starting in MPC \cite{Rawlings2009}, a candidate input sequence $\hat{\bm v}_t$ for the next $\mu$ time steps is generated by shifting the previously predicted input sequence $\hat{\bm u}_t$ and extending it by $v_t$ in~\eqref{algo:CandInput}. This input sequence is then used to predict the state $\mu$ time steps in the future in~\eqref{algo:PredStates} and OGD is applied again to calculate a desired state $x^\pi_{t+\mu}$ improving the state cost in \eqref{algo:OGDStates}. Last, the predicted input sequence $\hat{\bm u}_{t-1}$ is updated in \eqref{algo:delta}-\eqref{algo:PredInputs} by computing a feasible input sequence $g_t$ in \eqref{algo:OCP}. In contrast to \cite{Nonhoff2020}, Algorithm~1 employs a convex combination of $\hat v_t$ and $g_t$ in \eqref{algo:PredInputs} instead of committing to applying the updated predicted input sequence for the next $\mu$ time steps. Note that application of the whole predicted sequence $\hat{\bm u}_t$ yields $x^{\hat u_t}(t+\mu; x_{t}) = x^\pi_{t+\mu}$ as shown in~\eqref{eq:Predhatueqxpi} in the Appendix. The whole procedure is illustrated in Figure~\ref{fig:algo}.

At every time step $t$, Algorithm~1 solves two projections in \eqref{algo:OGDInputs} and \eqref{algo:OGDStates}. Additionally, a feasibility problem has to be solved in \eqref{algo:OCP} if the predicted state $\hat x_{t+\mu}$ is not optimal. The two projections in \eqref{algo:OGDInputs} and \eqref{algo:OGDStates} are, in general, projections onto convex polytopic sets. In particular, they are computationally cheap if the constraint sets $\mathcal U$ and $\mathcal X$ have a simple shape, such as, e.g., box constraints. The feasibility problem in \eqref{algo:OCP} can be cast as a linear feasibility program since all constraint sets are polytopes resulting in linear constraints.

\begin{figure}
	\centering \small
	\vspace{10pt}
	\tikzstyle{state} = [circle, fill, inner sep = 1pt]

\begin{tikzpicture}[scale=.9]


\node at (-4.75,-1.5) {$\mathcal X$};
\draw (-4.5,-2.5) -- (-4.5,2.5); 
\draw (-4.5,2.5) -- (3.5,2.5);
\draw (3.5,2.5) -- (3.5,-2.5);
\draw (3.5,-2.5) -- (-4.5,-2.5);

\draw (-4,-2) -- (-4,2);
\draw (-4,2) -- (3,2);
\draw (3,2) -- (3,-2);
\draw (3,-2) -- (-4,-2);
\node at (2.5,-1.5) {$\bar{\mathcal X}$}; 

\node (xt-1) at (0.25,-1.6) [state, label = below: $x_{t}$] {};
\node (pi-2) at (1.25,2) [state, label = {[label distance = -5pt]20: $x^\pi_{t+\mu-1}$}] {};
\node (hat-1) at (0,3.25) [state, label = {[label distance = -5pt]10: $\hat x_{t+\mu}$}] {}
	edge (pi-2);
\coordinate (grad) at (-1.5,2.56)
	edge [<-,thick,red] node[left,black, near end,xshift=-5pt] {$-\gamma_x \grad f_{t-1}^x(\hat x_{t+\mu})$} (hat-1) ;
\node (pi-1) at (-1.5,2) [state, label = {[label distance = -5pt]170: $x^\pi_{t+\mu}$}] {}
	edge [dashed] (grad);
\node (xf) at (-2, 1.5833) [state] {}
	edge [dashed] (hat-1);
\node at (-2.25,1.3) {$x^\pi_{t+\mu} - \bar\delta_t(x^\pi_{t+\mu} - \hat x_{t+\mu})$};

\coordinate (v1) at (1.25,-1.5)
	edge (xt-1);
\coordinate (v2) at (2.25,-.75)
	edge (v1);
\coordinate (v3) at (2.5,.25)
	edge (v2);
\coordinate (v4) at (2, 1.25)
	edge node[right] {(a)} (v3)
	edge(pi-2);
	
\coordinate (g1) at (1.25,-.75)
	edge (xt-1)
	edge [dashed, thin] (v1);
\coordinate (g2) at (1.5,0)
	edge (g1)
	edge [dashed, thin] (v2);
\coordinate (g3) at (1.25,.75)
	edge node[left] {(b)} (g2) 
	edge [dashed, thin] (v3);
\coordinate (g4) at (0.5, 1.25)
	edge (g3)
	edge [dashed, thin] (v4);
\coordinate (g5) at (-.5,1.6)
	edge (g4)
	edge (xf)
	edge [dashed, thin] (pi-2);
	
\coordinate (u1) at (1.25,-.9375)
	edge [very thick, green] (xt-1);
\coordinate (u2) at (1.6875,-0.1875)
	edge [very thick] (u1);
\coordinate (u3) at (1.5626,.625)
	edge [very thick] (u2);
\coordinate (u4) at (0.875, 1.25)
	edge [very thick] node[right] {(c)} (u3);
\coordinate (u5) at (-.0625,1.7)
	edge [very thick] (u4)
	edge [very thick] (pi-1);

\end{tikzpicture}%
	\vspace{-0pt}
	\caption{Schematic illustration of Algorithm~1. First, the predicted input sequence $\hat{\bm v}_t$ (a) is used to compute $\hat x_{t+\mu}$. Then, one gradient descent step (red) is applied. A feasible input sequence $\bm g_t$ (b) and a convex combination of (a) and (b) are computed that lead to the updated predicted input sequence $\hat{\bm u}_t$ (c, bold). Finally, only the first input (green) is applied to the system.}
	\vspace{-10pt}
	\label{fig:algo}
\end{figure}
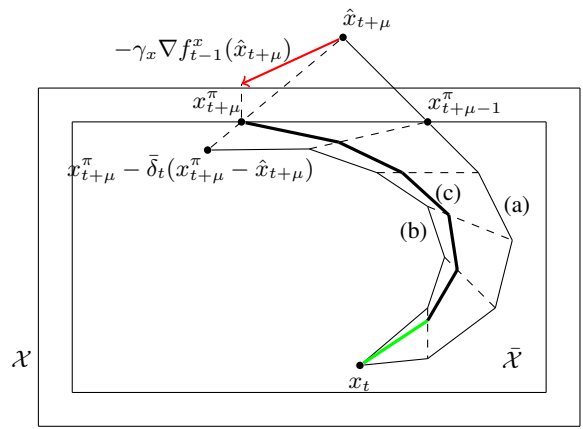

\section{REGRET ANALYSIS} \label{sec:Regret}

In this section, we state our main result, a bound on the regret of Algorithm~1. Its proof is given in the appendix.
\begin{theorem} \label{thm} 
	Let $x_0 \in \mathcal X$ and Assumptions~\ref{assump:constraints}-\ref{assump:contrconst} be satisfied. Given a feasible initialization $\bm{\hat u_0}$, $v_0$, and step sizes $\gamma_u \leq \frac{2}{l_u+\alpha_u}$ and $\frac{\norm{A}-1}{\norm{A}\alpha_x} < \gamma_x \leq \frac{2}{l_x + \alpha_x}$, the dynamic regret $\mathcal R$ of Algorithm~1 can be upper bounded by
	\vspace{-5pt}
	\begin{align*}
	\mathcal R \leq C_0 + C_\theta \sum_{t=0}^T \norm{\theta_t - \theta_{t-1}} + C_\eta \sum_{t=0}^T \norm{\eta_t - \eta_{t-1}},
	\end{align*} 
	for some constants $C_0,~C_\theta,~C_\eta>0$ independent of $T$. Moreover, $x_t \in \mathcal X$ and $u_t \in \mathcal U$ for all $t \in [0,T]$.
\end{theorem}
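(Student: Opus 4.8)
The plan is to prove the two assertions in order, since feasibility is what makes the regret analysis meaningful.

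\textbf{Constraint satisfaction.} I would argue by induction on $t$ that \emph{every} input/state trajectory the algorithm plans is feasible, so that the realized one is too. Suppose the predicted sequence $\hat{\bm u}_{t-1}$ is feasible from $x_{t-1}$. Since $u_{t-1}=\hat u_{t-1}^{(1)}$ was applied, the measured state $x_t=Ax_{t-1}+Bu_{t-1}$ lies on that planned trajectory, so the shift $\hat{\bm v}_t$ in \eqref{algo:CandInput} reproduces its tail and is feasible on $\mathbb N_{[t,t+\mu-1]}$, while $\bm g_t$ is feasible by construction in \eqref{algo:OCP}. Because $\mathcal X,\mathcal U$ are convex and \eqref{eq:SysDyn} is affine, the trajectory generated by $\hat{\bm u}_t=(1-\alpha_t)\hat{\bm v}_t+\alpha_t\bm g_t$ is the pointwise convex combination of two feasible trajectories, hence feasible; in particular $u_t=\hat u_t^{(1)}\in\mathcal U$ and $x_{t+1}\in\mathcal X$. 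The algebra behind \eqref{algo:delta}--\eqref{algo:PredInputs} (the choice $\alpha_t=1/(1+\bar\delta_t)$) makes the terminal state $x^{\hat u_t}(t+\mu;x_t)=x^\pi_{t+\mu}\in\bar{\mathcal X}\subset\mathcal X$, closing the induction from the feasible initialization.

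\textbf{Reducing regret to tracking errors.} Using $L_t=f_t^x+f_t^u$ and the fact that $\theta_t,\eta_t$ minimize $f_t^x,f_t^u$ (so $f_t^x(\theta_t)\le f_t^x(x_t^*)$ and $f_t^u(\eta_t)\le f_t^u(u_t^*)$), each summand of \eqref{eq:regret} is bounded by $[f_t^x(x_t)-f_t^x(\theta_t)]+[f_t^u(u_t)-f_t^u(\eta_t)]$, and Lipschitz continuity yields $\mathcal R\le L_x\sum_t\norm{x_t-\theta_t}+L_u\sum_t\norm{u_t-\eta_t}$. It then suffices to bound each tracking-error sum by a $T$-independent constant plus a multiple of the path length.

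\textbf{Contraction at the prediction level.} The two projected-gradient maps contract toward the current setpoints. Nonexpansiveness of $\Pi_{\mathcal U}$ and strong convexity/smoothness give $\norm{v_t-\eta_{t-1}}\le\rho_u\norm{v_{t-1}-\eta_{t-1}}$, and likewise $\norm{x^\pi_{t+\mu}-\theta_{t-1}}\le\rho_x\norm{\hat x_{t+\mu}-\theta_{t-1}}$, with $\rho_u=\max(|1-\gamma_u\alpha_u|,|1-\gamma_u l_u|)<1$ for $\gamma_u\le\tfrac{2}{l_u+\alpha_u}$ and $\rho_x=\max(|1-\gamma_x\alpha_x|,|1-\gamma_x l_x|)$. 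A triangle inequality swapping $\eta_{t-1}\to\eta_t$ gives $\norm{v_t-\eta_t}\le\rho_u\norm{v_{t-1}-\eta_{t-1}}+\norm{\eta_t-\eta_{t-1}}$; summing this geometric recursion bounds $\sum_t\norm{v_t-\eta_t}$ by a constant plus $\tfrac{1}{1-\rho_u}$ times the input path length. For the states I would use the identity $\hat x_{t+1+\mu}=A\,x^\pi_{t+\mu}+B\,v_{t+1}$ (from $\hat{\bm u}_t$ reaching $x^\pi_{t+\mu}$ and $\hat{\bm v}_{t+1}$ extending its shift by $v_{t+1}$) together with $\theta_t=A\theta_t+B\eta_t$ to obtain, for $P_t:=\norm{\hat x_{t+\mu}-\theta_{t-1}}$, the recursion $P_{t+1}\le\norm{A}\rho_x P_t+\norm{A}\norm{\theta_t-\theta_{t-1}}+\norm{B}\norm{v_{t+1}-\eta_t}$. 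The two step-size conditions are exactly what force $\norm{A}\rho_x<1$: the lower bound $\gamma_x>\tfrac{\norm{A}-1}{\norm{A}\alpha_x}$ controls the $\norm{A}|1-\gamma_x\alpha_x|$ term, while Assumption~\ref{assump:system} controls $\norm{A}|1-\gamma_x l_x|$. Hence the recursion is contractive and $\sum_t P_t$ is bounded by a constant plus multiples of both path lengths (via the already-bounded $\sum_t\norm{v_t-\eta_t}$).

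\textbf{From predicted to actual errors --- the main obstacle.} The remaining and hardest step is passing from these prediction-level bounds to the \emph{realized} errors $\norm{x_t-\theta_t}$ and $\norm{u_t-\eta_t}$, because the receding-horizon, convex-combination scheme means the executed trajectory follows no single plan. The lever I would use is that \eqref{algo:delta}--\eqref{algo:alpha} force $\alpha_t=\tfrac{\norm{\hat x_{t+\mu}-x^\pi_{t+\mu}}}{\delta+\norm{\hat x_{t+\mu}-x^\pi_{t+\mu}}}\le\tfrac{1}{\delta}\norm{\hat x_{t+\mu}-x^\pi_{t+\mu}}\le\tfrac{1+\rho_x}{\delta}P_t$, so the weight on the feasibility sequence $\bm g_t$ is $O(P_t)$ and summable. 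Consequently the realized inputs and states deviate from the shifted near-optimal plan by at most $O(P_t)$ per step (bounding $\norm{g_t^{(\cdot)}-\hat v_t^{(\cdot)}}\le D_u$ and propagating through the fixed-length, bounded-$\norm{A}$ dynamics). Chaining these corrections over the window of length $\mu$, relating the shifted-plan inputs back to the estimates $v_{t-\mu+1},\dots,v_t$ (each within $\norm{v_s-\eta_s}$ plus a window of $\norm{\eta_s-\eta_{s-1}}$ terms) and relating $x_t$ to $x^\pi_t$ and then to $\theta_t$ (within $\rho_x P_{t-\mu}$ plus a window of $\norm{\theta_s-\theta_{s-1}}$ terms), bounds $\sum_t\norm{u_t-\eta_t}$ and $\sum_t\norm{x_t-\theta_t}$ by a constant plus multiples of $\sum_t P_t$ and the two path lengths. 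Substituting into the regret bound yields the claim with $T$-independent $C_0,C_\theta,C_\eta$. I expect the bookkeeping of this last step --- the one-step shift, the $O(P_t)$ convex-combination corrections, and the finite window $\mu$ --- to be the principal technical difficulty, whereas the contraction estimates are routine once the step-size hypotheses are read as $\rho_u<1$ and $\norm{A}\rho_x<1$.
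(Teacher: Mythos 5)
Your proposal is correct and follows essentially the same route as the paper: the same induction-plus-convexity argument for feasibility, the same reduction of regret to $L_x\sum_t\norm{x_t-\theta_t}+L_u\sum_t\norm{u_t-\eta_t}$, the same contraction recursions for $\norm{v_t-\eta_{t-1}}$ and $\norm{\hat x_{t+\mu}-\theta_{t-1}}$ built on $\hat x_{t+\mu+1}=Ax^\pi_{t+\mu}+Bv_{t+1}$, and, crucially, the same lever for the prediction-to-realization gap, namely that $\alpha_t\le\frac{1}{\delta}\norm{\hat x_{t+\mu}-x^\pi_{t+\mu}}$ is summable and each unit of weight on $\bm g_t$ costs at most $D_u$ (this is exactly the content of the paper's Lemma~\ref{lemma} and the bound on $\sum_t\norm{u_t-\eta_t}$). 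The bookkeeping you flag as the main difficulty is indeed where the paper spends its effort, but your outline of it is the paper's argument.
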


Theorem \ref{thm} states that the regret of Algorithm~1 is linear in the path length. Hence, we achieve sublinear regret if the path length is sublinear in $T$. This result is well aligned with other results in the literature, see, e.g., \cite{Mokhtari2016,Li2018,Li2019}. Despite the presence of input and state constraints in our setting, we achieve the same sublinear regret bound as in the unconstrained case \cite{Nonhoff2020} up to constant factors. Note that, as already discussed in \cite{Nonhoff2020}, this result implies asymptotic convergence to the optimal equilibrium if it holds that $(\theta_t,\eta_t) = (\theta_{t'},\eta_{t'})$ for some $t' \in \mathbb N$ and all $t \geq t'$. In addition, Theorem~\ref{thm} guarantees constraint satisfaction for every time instant $t \in \mathbb{N}_{[0,T]}$.

\section{SIMULATIONS} \label{sec:Simulations}

In this section, we illustrate the implementation of Algorithm~1 and its performance and compare it to the algorithm presented in \cite{Nonhoff2020}, which cannot guarantee constraint satisfaction. We consider the same system as in \cite{Nonhoff2020} given by
\begin{equation*}
	x_{t+1} = \begin{pmatrix} x_{t+1,1} \\ x_{t+1,2} \\ x_{t+1,3} \end{pmatrix} = \begin{pmatrix} 1.05 & 0.7 & 1.75 \\ 0.35 & 0.7 & 1.05 \\ 1.4 & 0.105 & 1.855 \end{pmatrix} \! x_t + \! \begin{pmatrix} 1 \\ 0 \\ 1 \end{pmatrix} \! u_t
\end{equation*}
and add state and input constraints 
\begin{equation*}
\begin{aligned}
	&x_t \in \mathcal X_0 = \{ x \in \mathbb R^3 | |x_{1}| \leq 3,|x_{2}| \leq 2, |x_{3}| \leq 1 \}, \\
	&u_t \in \mathcal U = \{ u \in \mathbb R |~ |u| \leq 4 \}.
\end{aligned}
\end{equation*}
Unfortunately, the state constraint set $\mathcal X_0$ does not satisfy Assumption~\ref{assump:contrconst}, because it includes states which are either not reachable by a feasible input sequence or outside of the viability kernel. We apply methods from \cite{Maidens2013} and the Multi-Parametric Toolbox \cite{MPT3} to compute a subset~$\mathcal X$ of the viability kernel that satisfies Assumption~\ref{assump:contrconst} with $\mu = 6$. The shrinked constraint set~$\bar{\mathcal X}$ is calculated as detailed in Section~\ref{sec:Setting} with $\delta = 0.01$. Moreover, we choose cost functions $L_t(x,u) = f_t^x(x) + f_t^u(u) = \frac{1}{2}\norm{x-\theta_t}^2 + \frac{1}{2}\norm{u - \eta_t}^2$. The optimal state and input $(\theta_t,\eta_t)$ are time-varying and a priori unknown. The initial condition $x_0$ and initial feasible inputs $v_0$, $\bm{\hat u_0}$ were all set to $0$. The step sizes were chosen as $\gamma_x = 0.98$ and $\gamma_v = 0.98$ for both algorithms, satisfying the assumptions of Theorem~\ref{thm}. The auxiliary input $g_t$ in \eqref{algo:OCP} in Algorithm~1 is found by the 'linprog' command in Matlab.

Figure~\ref{fig:SimResults} shows the simulated closed loop for both algorithms, the optimal states $\theta_t$ and inputs $\eta_t$ and the corresponding constraints. It can be seen that Algorithm~1 tracks the optimal equilibrium. Compared to the algorithm presented in \cite{Nonhoff2020}, Algorithm~1 is slower to react to setpoint changes. This is due to the fact that controllability under constraints in Assumption~\ref{assump:contrconst} requires $\mu = 6$, whereas \mbox{$\mu=n=3$} could be used in \cite{Nonhoff2020} (see Remark~\ref{rem:PredHor}).
On the other hand, in contrast to the algorithm from \cite{Nonhoff2020}, Algorithm~1 is guaranteed to satisfy the constraints at all times.

\begin{figure}
	\centering \small
	\setlength\breite{.4\textwidth}
	\vspace{5pt}
	\input{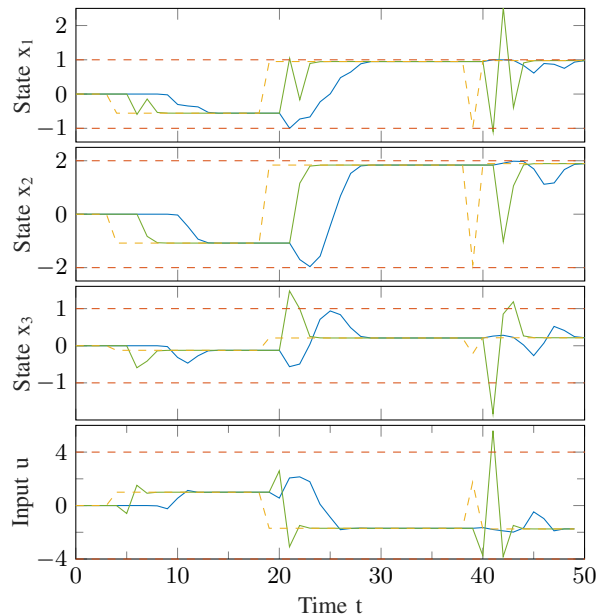}
	\vspace{-5pt}
	\caption{State and input trajectories when applying Algorithm 1 (blue solid) compared to the algorithm presented in \cite{Nonhoff2020} (green solid) and the the optimal states $\theta_t$ and inputs $\eta_t$ (yellow dashed) together with the constraints (red dashed).}
	\label{fig:SimResults}
	\vspace{-18pt}
\end{figure}

\section{CONCLUSION} \label{sec:Conclusion}

In this work, we apply online convex optimization to linear dynamical systems subject to polytopic state and input constraints. We give an online algorithm that achieves sublinear regret if the variation of the cost functions, measured in path length, is sublinear and guarantees constraint satisfaction.

There are two obvious directions for future research. On the one hand, the prediction horizon could be shortened as discussed in Remark~\ref{rem:PredHor}, which may result in the algorithm not being able to apply a full gradient step within the shorter horizon. In this case, new analysis techniques are required to prove that a sublinear regret bound still holds. On the other hand, Assumption \ref{assump:tracking} could be relaxed, allowing economic cost functions. 





\section*{APPENDIX}

Before we prove Theorem~\ref{thm}, we give some auxiliary results. First, since $\theta$ and $\eta_t$ are only defined for $0 \leq t \leq T$, we fix without loss of generality for the remainder of this work $\eta_t = v_0$ and $\theta_t = \hat x_\mu$ for all $t < 0$. 

Second, in order to shorten notation, let $\bar \alpha_t = 1 - \alpha_t$ and 
\[ \bar \alpha_j^i = \begin{cases} \prod_{s=i}^j \bar \alpha_s &\text{if } i < j \\ \bar \alpha_i &\text{if } i=j \\ 1 &\text{if } i>j \end{cases}.\]
We have by the definition of $\alpha_t$ in \eqref{algo:alpha} that $0 \leq \alpha_t < 1$ and, hence, $0 < \bar \alpha_t \leq 1$ as well as $0 < \bar \alpha_{t+s}^t \leq 1$ for any $s \in \mathbb N$. Moreover, since $\alpha_t + \bar \alpha_t =1$, it holds for any $\tau, s \in \mathbb N$ that
\[
	\bar \alpha_{\tau+s}^\tau + \sum_{j=0}^s \bar \alpha_{\tau+s}^{\tau+1+j} \alpha_{\tau+j} = 1. \numberthis \label{eq:ConvexCombRecursion}
\]

Next, we have the following result on the rate of convergence of projected gradient descent \cite{Nesterov2018}. For an $\alpha$-convex and $l$-smooth function $f:\mathcal X \subset \mathbb R^n \rightarrow \mathbb R$ to be minimized, one projected gradient step $x_1 = \Pi_{\mathcal X} (x_0 - \gamma \grad f(x_0))$, where $\gamma \leq \frac{2}{\alpha+l}$ is a step size parameter, satisfies
\begin{align}
	\norm{x_1 - \theta} \leq \kappa \norm{x_0 - \theta}, \label{eq:ContractionGD}
\end{align}
where $\theta = \arg \min_{x \in \mathcal X} f(x)$ and $\kappa = 1-\alpha \gamma$. Accordingly, we define $\kappa_x = 1-\alpha_x \gamma_x$ and $\kappa_u = 1-\alpha_u \gamma_u$.

Third, we examine the closed-loop trajectories and the predicted trajectories of Algorithm~1. Using \eqref{algo:PredInputs} and the definition of $\alpha_t$ in Algorithm~1 it can be shown that
\[
	A^\mu x_{t} + S_c \hat u_t = x^\pi_{t+\mu} \numberthis \label{eq:Predhatueqxpi}
\]
holds in both cases, $\alpha_t = 0$ as well as $\alpha_t \neq 0$. Then, the predicted states $\hat x_{t+\mu}$ can be calculated recursively as follows
\begin{align*}
	\hat x_{t+\mu+1} &\refeq{\eqref{algo:CandInput},\eqref{algo:PredStates}} A^\mu \left( A x_{t} + B \hat u_t^{(1)} \right) + S_c \begin{pmatrix} 0 \\ \hat u_t^{(\mu)} \\ \dots \\ \hat u_t^{(2)} \end{pmatrix} + B v_{t+1} \\
	&\refeq{\eqref{eq:Predhatueqxpi}} Ax^\pi_{t+\mu} + Bv_{t+1}. \numberthis \label{eq:RecPredStates}
\end{align*} 
Moreover, the predicted input $\hat u_t$ can be expressed in terms of previous inputs by
\[
	\hat u_t^{(\mu-s)} \refeq{\eqref{algo:PredInputs}} \bar \alpha_{t}^{t-s} v_{t-s} + \sum_{j=0}^{s} \bar \alpha_{t}^{t+1-j} \alpha_{t-j}g_{t-j}^{(\mu-s+j)}. \numberthis \label{eq:RecPredInputs}
\]
The real state trajectory is $x_{t+\mu} = A^\mu x_{t} + S_c \begin{pmatrix} u_{t+\mu-1} \\ \dots \\ u_t \end{pmatrix}$, where the inputs $u_{t+s}$, $s \in \mathbb N_{[0,t+\mu-1]}$, can be expressed by repeatedly inserting \eqref{algo:PredInputs} by
\begin{equation*}
\begin{aligned}
	u_{t+s} &\refeq{\eqref{algo:Output}} \hat u_{t+s}^{(1)} \refeq{\eqref{algo:PredInputs}} \bar \alpha_{t+s} \hat u_{t+s-1}^{(2)} + \alpha_{t+s} g_{t+s}^{(1)} \\
	&\refeq{\eqref{algo:PredInputs}} \bar \alpha_{t+s}^t \hat u_{t-1}^{(s+2)} + \sum_{j=0}^s \bar \alpha_{t+s}^{t+1+j} \alpha_{t+j} g_{t+j}^{(s-j+1)},
\end{aligned}
\end{equation*}
if $0 \leq s < \mu -1$, and
\begin{equation}
u_{t+s} = \sum_{j=0}^{\mu-1} \left( \bar \alpha_{t+\mu-1}^{t+1+j} \alpha_{t+j} g_{t+j}^{(\mu-j)} \right) + \bar \alpha_{t+\mu-1}^t v_t, \label{eq:RecInputs}
\end{equation}
if $s = \mu-1$. Next, we are ready to state the following lemma, which bounds the cumulative prediction error.
\begin{lemma} \label{lemma}
	Let Assumptions~\ref{assump:constraints}-\ref{assump:contrconst} be satisfied. Given step sizes $\gamma_u \leq \frac{2}{l_u+\alpha_u}$ and $\frac{\norm{A}-1}{\norm{A}\alpha_x}< \gamma_x \leq \frac{2}{l_x + \alpha_x}$, it holds that
	\begin{align*}
		\sum_{t=0}^{T-\mu} \norm{\hat x_{t+\mu} - x_{t+\mu}} \leq \mu C_1 \norm{A} &\sum_{t=0}^{T} \norm{\theta_t - \theta_{t-1}}\\
		+ \frac{\kappa_u \mu}{1-\kappa_u}\left( \norm{S_c} + C_1 \norm{B} \right) &\sum_{t=0}^{T} \norm{\eta_t - \eta_{t-1}},
	\end{align*}
	where $C_1 = \frac{D_u (2\mu-1) (1+\kappa_x) \norm{S_c}}{\delta(1-\norm{A}\kappa_x)}$.
\end{lemma}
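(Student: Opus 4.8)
The plan is to attribute the prediction error entirely to the mismatch between the candidate input sequence $\hat{\bm v}_t$ used in the prediction~\eqref{algo:PredStates} and the inputs actually applied over the horizon $[t,t+\mu-1]$, and then to damp that mismatch using the contraction of the two projected-gradient updates. Since $\hat x_{t+\mu}$ and the true state $x_{t+\mu}$ both start from the measured $x_t$, their difference is purely an input effect, $\hat x_{t+\mu}-x_{t+\mu}=S_c(\hat v_t-\tilde u_t)$ with $\tilde u_t$ the stack of $u_t,\dots,u_{t+\mu-1}$, so that $\norm{\hat x_{t+\mu}-x_{t+\mu}}\le\norm{S_c}\,\norm{\hat v_t-\tilde u_t}$ and it remains to control the input mismatch.

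First I would expand $\hat v_t-\tilde u_t$ block by block using the recursive expressions \eqref{eq:RecPredInputs} and \eqref{eq:RecInputs} for the predicted and applied inputs. After substituting, the convex-combination identity \eqref{eq:ConvexCombRecursion} lets me replace each occurring factor $1-\bar\alpha_{t+i-1}^t$ by $\sum_{j}\bar\alpha_{t+i-1}^{t+1+j}\alpha_{t+j}$, at which point every block collapses into a sum of terms $\bar\alpha_{t+i-1}^{t+1+j}\alpha_{t+j}(w-g_{t+j}^{(\cdot)})$, i.e. an $\alpha_{t+j}$-weighted difference of two inputs $w,g_{t+j}^{(\cdot)}\in\mathcal U$. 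Because any two elements of $\mathcal U$ differ by at most $D_u$ and $0<\bar\alpha\le1$, each such term is bounded by $\alpha_{t+j}D_u$; collecting the $\mu$ blocks and summing over $t$ then bounds $\sum_t\norm{\hat v_t-\tilde u_t}$ by a multiple of $\sum_\tau\alpha_\tau$ carrying the combinatorial factors $\mu$ and $(2\mu-1)$ that appear in the statement.

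Second I would bound the weights. From \eqref{algo:delta}--\eqref{algo:alpha}, $\alpha_t=\norm{\hat x_{t+\mu}-x^\pi_{t+\mu}}/(\norm{\hat x_{t+\mu}-x^\pi_{t+\mu}}+\delta)\le\tfrac{1}{\delta}\norm{\hat x_{t+\mu}-x^\pi_{t+\mu}}$, and since $x^\pi_{t+\mu}$ is one projected gradient step toward $\theta_{t-1}$, the contraction \eqref{eq:ContractionGD} yields $\norm{\hat x_{t+\mu}-x^\pi_{t+\mu}}\le(1+\kappa_x)\norm{\hat x_{t+\mu}-\theta_{t-1}}$; hence $\alpha_t\le\tfrac{1+\kappa_x}{\delta}\,e_t$ with $e_t:=\norm{\hat x_{t+\mu}-\theta_{t-1}}$. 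I would then set up coupled recursions for the tracking errors: combining \eqref{eq:RecPredStates} with the steady-state identity $\theta_t=A\theta_t+B\eta_t$ from Assumption~\ref{assump:tracking} and the two contractions gives $e_{t+1}\le\norm{A}\kappa_x e_t+\norm{A}\norm{\theta_t-\theta_{t-1}}+\norm{B}d_{t+1}$ for the state error, and $d_{t+1}\le\kappa_u(d_t+\norm{\eta_t-\eta_{t-1}})$ for the input error $d_t:=\norm{v_t-\eta_{t-1}}$. Summing both geometric recursions --- the first converging because the lower bound $\gamma_x>\tfrac{\norm{A}-1}{\norm{A}\alpha_x}$ is exactly $\norm{A}\kappa_x<1$ --- bounds $\sum_t d_t$ by $\tfrac{\kappa_u}{1-\kappa_u}\sum_t\norm{\eta_t-\eta_{t-1}}$ and $\sum_t e_t$ by $\tfrac{1}{1-\norm{A}\kappa_x}(\norm{A}\sum_t\norm{\theta_t-\theta_{t-1}}+\norm{B}\sum_t d_t)$, where the conventions $\theta_t=\hat x_\mu$, $\eta_t=v_0$ for $t<0$ force $e_0=d_0=0$ so that no additive constant survives. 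Chaining the three estimates and identifying $C_1=\tfrac{D_u(2\mu-1)(1+\kappa_x)\norm{S_c}}{\delta(1-\norm{A}\kappa_x)}$ produces the claimed bound, with the terms carrying $\norm{S_c}$ directly versus $C_1\norm{B}$ sorted out by tracking which input mismatch is weighted by $S_c$ at the final prediction step.

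The main obstacle I anticipate is the bookkeeping of the second paragraph: verifying that the shifted indices in \eqref{eq:RecPredInputs} and \eqref{eq:RecInputs} line up so that, after invoking \eqref{eq:ConvexCombRecursion}, the block differences telescope cleanly into purely $\alpha$-weighted differences of feasible inputs --- only then is the crude bound by $D_u$ legitimate. A secondary delicate point is the contractivity of the coupled $(e_t,d_t)$ recursion: the state part is stable precisely under $\norm{A}\kappa_x<1$, and Assumption~\ref{assump:system} is what guarantees the admissible interval for $\gamma_x$ is nonempty, so the hypotheses of the lemma are exactly the ones that make the geometric sums converge.
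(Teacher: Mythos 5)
Your proposal follows essentially the same route as the paper's proof: write $\hat x_{t+\mu}-x_{t+\mu}=S_c(\hat v_t-\tilde u_t)$, use the convex-combination identity \eqref{eq:ConvexCombRecursion} together with \eqref{eq:RecPredInputs}--\eqref{eq:RecInputs} to reduce the input mismatch to $\alpha$-weighted differences of feasible inputs bounded by $D_u$ (plus the $v-\eta$ residuals), bound $\alpha_t\le\frac{1}{\delta}\norm{\hat x_{t+\mu}-x^\pi_{t+\mu}}\le\frac{1+\kappa_x}{\delta}\norm{\hat x_{t+\mu}-\theta_{t-1}}$, and close the argument with exactly the two geometric recursions \eqref{eq:vt+1-etat} and \eqref{eq:hatx-theta}, whose convergence is what the step-size conditions $\kappa_u<1$ and $\norm{A}\kappa_x<1$ guarantee. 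This matches the paper's proof in all essential respects (your bookkeeping in the second paragraph would in fact yield slightly smaller constants than the stated $C_1$, which still implies the lemma).
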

\begin{proof}
	First, note that the step size $\gamma_x$ is well-defined due to the bound on $\norm{A}$ in Assumption~\ref{assump:system}.
	
	Then, by inserting \eqref{eq:ContractionGD} we have
	\[
		\sum_{t=0}^T \norm{v_{t+1} - \eta_t} \refleq{\eqref{eq:ContractionGD}}\kappa_u \sum_{t=0}^T \Big( \norm{v_t - \eta_{t-1}} + \norm{\eta_t - \eta_{t-1}} \Big).
	\]
	Due to $v_0 = \eta_{-1}$ and $1-\kappa_u > 0$ rearranging yields
	\[
		\sum_{t=0}^T \norm{v_{t+1} - \eta_t} \leq \frac{\kappa_u}{1-\kappa_u} \sum_{t=0}^T \norm{\eta_t - \eta_{t-1}}. \numberthis \label{eq:vt+1-etat}
	\]
	Additionally, since $\theta_{-1} = \hat x_\mu$, we have by Assumption~\ref{assump:tracking}
	\begin{align*}
		&\sum_{t=0}^T \norm{\hat x_{t+\mu} - \theta_{t-1}} \leq \sum_{t=0}^{T} \norm{\hat x_{t+\mu+1} - \theta_t} \\
		\refeq{\eqref{eq:RecPredStates}} ~ &\sum_{t=0}^T \norm{Ax^\pi_{t+\mu} + Bv_{t+1} - A\theta_t - B\eta_t} \\
		\begin{split}
		\hspace{5pt}\refleq{\eqref{eq:ContractionGD},\eqref{eq:vt+1-etat}} \hspace{7pt} &\norm{A}\kappa_x \sum_{t=0}^T \norm{\hat x_{t+\mu} - \theta_{t-1}} + \norm{A} \sum_{t=0}^T \norm{\theta_t - \theta_{t-1}}\\
		&\quad + \frac{\kappa_u}{1-\kappa_u}\norm{B} \sum_{t=0}^T \norm{\eta_t - \eta_{t-1}}.
		\end{split}
	\end{align*}
	The lower bound on the step size $\gamma_x$ implies $\norm{A}\kappa_x < 1$. Hence, rearranging yields
	\begin{align}
		\begin{split} \label{eq:hatx-theta}
		\sum_{t=0}^T &\norm{\hat x_{t+\mu} - \theta_{t-1}} \leq \frac{\norm{A}}{1-\norm{A}\kappa_x} \sum_{t=0}^T \norm{\theta_t - \theta_{t-1}} \\&\qquad + \frac{\norm{B}\kappa_u}{(1-\kappa_u)(1-\norm{A}\kappa_x)} \sum_{t=0}^T \norm{\eta_t - \eta_{t-1}}. 
		\end{split}
	\end{align}
	Moreover, using the triangle inequality we get
	\begin{align*}
		&\sum_{t=0}^T \norm{\hat x_{t+\mu} - x^\pi_{t+\mu}} \refleq{\eqref{eq:ContractionGD}} \hspace{3pt} (1+\kappa_x) \sum_{t=0}^T \norm{\hat x_{t+\mu}-\theta_{t-1}} \\
		\begin{split}
		\refleq{\eqref{eq:hatx-theta}}\hspace{3pt}&\frac{\norm{A}(1+\kappa_x)}{1-\norm{A}\kappa_x} \sum_{t=0}^T \norm{\theta_t - \theta_{t-1}} \\
		&\qquad + \frac{\norm{B}\kappa_u(1+\kappa_x)}{(1-\kappa_u)(1-\norm{A}\kappa_x)} \sum_{t=0}^T \norm{\eta_t - \eta_{t-1}}.
		\end{split} \numberthis \label{eq:hatx-xpi}
	\end{align*}
	Last, we combine all the above results to proof Lemma~\ref{lemma}. By \eqref{algo:PredStates}, we get $\hat x_{t+\mu} - x_{t+\mu} \hspace{7pt} = \hspace{7pt} S_c \begin{pmatrix} 
		v_t - u_{t+\mu-1} \\ \hat u_{t-1}^{(\mu)} - u_{t+\mu-2} \\ \vdots \\ \hat u_{t-1}^{(2)} - u_t \end{pmatrix}$,	where the right-hand side of the equation can be rewritten using \eqref{eq:RecInputs}. By adding $S_c\begin{pmatrix} \eta_{t-1}^T & \dots & \eta_{t-\mu}^T \end{pmatrix}^T-S_c\begin{pmatrix} \eta_{t-1}^T & \dots & \eta_{t-\mu}^T \end{pmatrix}^T$, taking the norm on both sides, using \eqref{eq:ConvexCombRecursion} and \eqref{eq:RecPredInputs}, we arrive at
	\[
		\norm{\hat x_{t+\mu} {-} x_{t+\mu}} {\leq} \norm{S_c} \! \left( \sum_{i=0}^{\mu-1} \! \norm{ v_{t-i} {-} \eta_{t-1-i} } {+} \mu D_u \hspace{-1.5ex} \sum_{j=1-\mu}^{\mu-1} \hspace{-1.5ex} \alpha_{t+j} \! \right)
	\]
	By summing over $t$ on both sides we get
	\begin{equation*}
	\begin{split}
		\sum_{t=0}^{T-\mu} &\norm{\hat x_{t+\mu} - x_{t+\mu}} \refleq{\eqref{algo:alpha}} \norm{S_c} \mu \sum_{t=0}^{T-\mu} \norm{v_{t+1} - \eta_{t}} \\ &+  \norm{S_c} \mu D_u (2\mu-1) \frac{1}{\delta} \sum_{t=0}^{T} \norm{\hat x_{t+\mu} - x^\pi_{t+\mu}},
	\end{split}
	\end{equation*}
	It remains to insert \eqref{eq:vt+1-etat} and \eqref{eq:hatx-xpi} to get the result.
\end{proof}
Now, we are finally ready to proof Theorem~\ref{thm}. \\
\begin{proof}
	First, we show the regret bound for Algorithm~1 and then discuss feasibility of the states and inputs. In order to obtain an upper bound for the regret, we begin by bounding the suboptimality of the chosen control inputs. By the definition of $u_t$ in \eqref{algo:Output} we have
	\begin{align*}
		&\sum_{t=0}^T \norm{u_t - \eta_t} = \sum_{t=0}^T \norm{\hat u_t^{(1)} - \eta_t } \\
		\begin{split}
		\refleq{\eqref{eq:ConvexCombRecursion},\eqref{eq:RecPredInputs}} \hspace{9pt} &\sum_{t=0}^T \norm{\bar \alpha_{t}^{t-\mu+1}(v_{t-\mu+1}-\eta_{t-\mu})} + \sum_{t=0}^T \norm{\eta_t - \eta_{t-\mu}} \\
		+ &\sum_{t=0}^T \norm{\sum_{j=0}^{\mu-1} \bar \alpha_{t}^{t+1-j} \alpha_{t-j} \left( g_{t-j}^{(j+1)} - \eta_{t-\mu} \right)}
		\end{split} \\
		\begin{split}
		\leq\hspace{9pt}&\sum_{t=0}^T \norm{v_{t+1} - \eta_t} + \mu \sum_{t=0}^T \norm{\eta_t - \eta_{t-1}} \\
		+ &\sum_{t=0}^T \sum_{j=0}^{\mu-1} \alpha_{t-j}\norm{g_{t-j}^{(j+1)}-\eta_{t-\mu}},
		\end{split}
	\end{align*}
	where we threw away terms $0<\bar \alpha^i_j \leq 1$ and used a telescoping series and the triangle inequality in the last line. By \eqref{eq:vt+1-etat} and the definition of $\alpha_t$ in \eqref{algo:alpha}, it holds that
	\[
		\sum_{t=0}^T \norm{u_t - \eta_t} \refleq{\eqref{eq:hatx-xpi}} C_2 \sum_{t=0}^T \norm{\eta_t - \eta_{t-1}} + C_3 \sum_{t=0}^T \norm{\theta_t - \theta_{t-1}}, \numberthis \label{eq:ut-etat}
	\]
	where $C_2 = \frac{\kappa_u\norm{B}(1+\kappa_x)\mu D_u}{\delta(1-\kappa_u)(1-\norm{A}\kappa_x)} + \frac{\kappa_u}{1-\kappa_u} + \mu$ and \linebreak $C_3 = \frac{\norm(A)(1+\kappa_x)\mu D_u}{\delta(1-\kappa_x)}$. 
	Last, we bound the regret $\mathcal R$ of Algorithm~1. Optimality of $\theta_t$ and $\eta_t$ implies
	\begin{align*}
		&\mathcal{R} \leq \sum_{t=0}^T f_t^x(x_t)+f_t^u(u_t)-f_t^x(\theta_t)-f_t^u(\eta_t) \\
		&\leq L_x \! \sum_{t=0}^{\mu-1} \! \norm{x_t {-} \theta_t} {+} L_x \! \sum_{t=0}^{T-\mu} \! \norm{x_{t+\mu} {-} \theta_{t+\mu}} {+} L_u \! \sum_{t=0}^T \! \norm{u_t {-} \eta_t}.
	\end{align*}
	Compactness of the set $\mathcal X$ implies existence of a finite constant $D_x$ that satisfies $\norm{x-y} \leq D_x$ for all $x,y \in \mathcal X$. Since $x_t,\theta_t \in \mathcal X$ and by \eqref{eq:ut-etat}, we obtain
	\begin{align*}
		\begin{split}
		\mathcal{R} &\refleq{\eqref{eq:ut-etat}} L_x \mu D_x + L_x \sum_{t=0}^{T-\mu} \norm{\hat x_{t+\mu} - x_{t+\mu}} + L_u \sum_{t=0}^T \norm{u_t - \eta_t} \\
		& + L_x \sum_{t=0}^{T-\mu} \norm{\hat x_{t+\mu} - \theta_{t-1}} + L_x \sum_{t=0}^{T-\mu} \norm{\theta_{t+\mu} - \theta_{t-1}}.
		\end{split}
	\end{align*}
	Inserting Lemma \ref{lemma}, \eqref{eq:hatx-theta}, and \eqref{eq:ut-etat} yields the desired result. The constants are given by $C_0 = L_x \mu D_x$, $C_\eta = L_u C_2 + \frac{L_x\norm{B}\kappa_u}{(1-\norm{A}\kappa_x)(1-\kappa_u)} + \frac{L_x\kappa_u\mu}{1-\kappa_u}(\norm{S_c}+C_1\norm{B})$ and $C_\theta = L_x\mu C_1\norm{A} + \frac{L_x \norm{A}}{1-\norm{A}\kappa_x} + L_x(\mu+1)+L_uC_3$. 
	
	
	Last, we show feasibility of the state and input trajectories emerging from application of Algorithm~1. Due to space restrictions, we only give a brief sketch of the proof. We assume that $\mathbf{\hat u}_{t-1}$ was a feasible input sequence at time $t-1$. Thus, we have that $\hat{\mathbf v}_t$ is a feasible input sequence for all but possibly the last time step. Moreover, $g_t$ is a feasible input sequence by construction. Since $\hat{\bm u}_t$ is a convex combination of $\hat{\bm v}_t$ and $\bm g_t$, we obtain that $\hat{\bm u}_t$ is feasible for all but the last time step by convexity of $\mathcal X$ and $\mathcal U$. Moreover, we have that $x^{\hat u_t}(t+\mu;x_{t}) \refeq{\eqref{eq:Predhatueqxpi}} x^\pi_{t+\mu} \in \mathcal{\bar X} \subset \mathcal X$ and $v_t \in \mathcal U$, which shows that $\hat{\bm u}_t$ is a feasible input sequence. The result then follows by induction, because $\hat{\mathbf u}_0$ admits a feasible initialization.
\end{proof}


\bibliographystyle{ieeetr}
\bibliography{biblio}

\begin{thebibliography}{10}

\bibitem{Zinkevich2003}
M.~Zinkevich, ``Online convex programming and generalized infinitesimal
  gradient ascent,'' in {\em Proc. of the 20th International Conference on
  Machine Learning (ICML)}, pp.~928 -- 936, 2003.

\bibitem{Shwartz2012}
S.~Shalev-Shwartz, ``Online learning and online convex optimization,'' {\em
  Foundations and Trends\textsuperscript{\textregistered} in Machine Learning},
  vol.~4, no.~2, pp.~107--194, 2012.

\bibitem{Hazan2016}
E.~Hazan, ``Introduction to online convex optimization,'' {\em Foundations and
  Trends\textsuperscript{\textregistered} in Optimization}, vol.~2, no.~3-4,
  pp.~157--325, 2016.

\bibitem{Jadbabaie2015}
A.~Jadbabaie, A.~Rakhlin, S.~Shahrampour, and K.~Sridharan, ``{Online
  Optimization : Competing with Dynamic Comparators},'' in {\em Proc. of the
  18th International Conference on Artificial Intelligence and Statistics},
  vol.~38, pp.~398--406, 2015.

\bibitem{Besbes2015}
O.~Besbes, Y.~Gur, and A.~Zeevi, ``Non-stationary stochastic optimization,''
  {\em Operations Research}, vol.~63, no.~5, pp.~1227--1244, 2015.

\bibitem{Mokhtari2016}
A.~Mokhtari, S.~Shahrampour, A.~Jadbabaie, and A.~Ribeiro, ``Online
  optimization in dynamic environments: Improved regret rates for strongly
  convex problems,'' in {\em Proc. of the 55th Conference on Decision and
  Control (CDC)}, pp.~7195--7201, IEEE, 2016.

\bibitem{Tanaka2006}
M.~Tanaka, ``Real-time pricing with ramping costs: A new approach to managing a
  steep change in electricity demand,'' {\em Energy Policy}, vol.~34, no.~18,
  pp.~3634--3643, 2006.

\bibitem{Li2018}
Y.~Li, G.~Qu, and N.~Li, ``Using predictions in online optimization with
  switching costs: A fast algorithm and a fundamental limit,'' in {\em 2018
  Annual American Control Conference (ACC)}, pp.~3008--3013, IEEE, 2018.

\bibitem{Li2019}
Y.~Li, X.~Chen, and N.~Li, ``Online optimal control with linear dynamics and
  predictions: Algorithms and regret analysis.,'' in {\em NeurIPS},
  pp.~14858--14870, 2019.

\bibitem{Nonhoff2020}
M.~Nonhoff and M.~A. M\"uller, ``Online gradient descent for linear dynamical
  systems,'' in {\em 21st IFAC World Congress}, 2020.
\newblock Available online at arXiv.org, arXiv:1912.09311.

\bibitem{Abbasi2014}
Y.~Abbasi-Yadkori, P.~Bartlett, and V.~Kanade, ``Tracking adversarial
  targets,'' in {\em Proc. of the 31st International Conference on Machine
  Learning}, vol.~32, pp.~369--377, 2014.

\bibitem{Cohen2018}
A.~Cohen, A.~Hasidim, T.~Koren, N.~Lazic, Y.~Mansour, and K.~Talwar, ``Online
  linear quadratic control,'' in {\em Proc. of the 35th International
  Conference on Machine Learning}, vol.~80, pp.~1029--1038, 2018.

\bibitem{Akbari2019}
M.~{Akbari}, B.~{Gharesifard}, and T.~{Linder}, ``{An Iterative Riccati
  Algorithm for Online Linear Quadratic Control},'' {\em arXiv e-prints}, 2019.
\newblock arXiv:1912.09451.

\bibitem{Agarwal2019}
N.~Agarwal, B.~Bullins, E.~Hazan, S.~Kakade, and K.~Singh, ``Online control
  with adversarial disturbances,'' in {\em Proc. of the 36th International
  Conference on Machine Learning}, vol.~97, pp.~111--119, 2019.

\bibitem{Colombino2020}
M.~{Colombino}, E.~{Dall'Anese}, and A.~{Bernstein}, ``Online optimization as a
  feedback controller: Stability and tracking,'' {\em IEEE Transactions on
  Control of Network Systems}, vol.~7, no.~1, pp.~422--432, 2020.

\bibitem{Bianchin2020}
G.~Bianchin, J.~I. Poveda, and E.~Dall'Anese, ``Online optimization of switched
  {LTI} systems using continuous-time and hybrid accelerated gradient flows,''
  {\em arXiv e-prints}, 2020.
\newblock arXiv:2008.03903.

\bibitem{Hauswirth2021}
A.~Hauswirth, S.~Bolognani, G.~Hug, and F.~D\"orfler, ``Timescale separation in
  autonomous optimization,'' {\em IEEE Transactions on Automatic Control},
  vol.~66, no.~2, pp.~611--624, 2021.

\bibitem{Lawrence2018}
L.~S.~P. Lawrence, Z.~E. Nelson, E.~Mallada, and J.~W. Simpson-Porco, ``Optimal
  steady-state control for linear time-invariant systems,'' in {\em Proc. of
  the 2018 IEEE Conference on Decision and Control (CDC)}, pp.~3251--3257,
  2018.

\bibitem{Nesterov2018}
Y.~Nesterov, {\em Lectures on Convex Optimization}, vol.~137 of {\em Springer
  Optimization and Its Applications}.
\newblock Springer International Publishing, 2~ed., 2018.

\bibitem{Faulwasser2018}
T.~Faulwasser, L.~Gr\"une, and M.~A. M\"uller, ``Economic nonlinear model
  predictive control,'' {\em Foundations and
  Trends\textsuperscript{\textregistered} in Systems and Control}, vol.~5,
  no.~1, pp.~1--98, 2018.

\bibitem{Aubin2011}
J.-P. Aubin, A.~M. Bayen, and P.~Saint-Pierre, {\em Viability Theory}.
\newblock Springer-Verlag Berlin Heidelberg, 2~ed., 2011.

\bibitem{Boccia2014}
A.~Boccia, L.~Gr\"une, and K.~Worthmann, ``Stability and feasibility of state
  constrained mpc without stabilizing terminal constraints,'' {\em Systems \&
  Control Letters}, vol.~72, pp.~14 -- 21, 2014.

\bibitem{Rawlings2009}
J.~B. Rawlings and D.~Q. Mayne, {\em Model Predictive Control: Theory and
  Design}.
\newblock Nob Hill Pub., 2009.

\bibitem{Maidens2013}
J.~N. Maidens, S.~Kaynama, I.~M. Mitchell, M.~M. Oishi, and G.~A. Dumont,
  ``Lagrangian methods for approximating the viability kernel in
  high-dimensional systems,'' {\em Automatica}, vol.~49, no.~7, pp.~2017 --
  2029, 2013.

\bibitem{MPT3}
M.~Herceg, M.~Kvasnica, C.~Jones, and M.~Morari, ``{Multi-Parametric Toolbox
  3.0},'' in {\em Proc. of the 2013 European Control Conference (ECC)},
  pp.~502--510, 2013.

\end{thebibliography}


\end{document}